\newtheorem{theorem}{Theorem}[section]
\newtheorem{lemma}[theorem]{Lemma}
\newtheorem{proposition}[theorem]{Proposition}
\newtheorem*{teo*}{Theorem}
\theoremstyle{definition}
\theoremstyle{remark}
\newtheorem{remark}[theorem]{Remark}
\numberwithin{equation}{section}
\DeclareMathOperator{\N}{\mathbb{N}}
\DeclareMathOperator{\Q}{\mathbb{Q}}
\DeclareMathOperator{\R}{\mathbb{R}}
\DeclareMathOperator{\C}{\mathbb{C}}
\DeclareMathOperator{\Z}{\mathbb{Z}}
\DeclareMathOperator{\NN}{\mathcal{N}}
\DeclareMathOperator{\Mat}{\text{Mat}}
\DeclareMathOperator{\Id}{\text{Id}}
\newcommand{\RM}[1]{\textup{\uppercase\expandafter{\romannumeral#1}}}
\title{On Limit Eigenvalue Distributions Associated to Residual Chains of Groups}
\author{
  Jan Boschheidgen 
  \footnote{Departamento de Matem\'aticas, Universidad Aut\'onoma de Madrid}}
\begin{document}

\maketitle
\begin{abstract}
    \noindent
    Let $G$ be a residually finite group.
    We give an explicit example in the discrete Heisenberg group that the Brown measure of multiplication operators 
    $A \in \Z[G] \subseteq \mathcal{B}(\ell^2(G))$ 
    in general can not be approximated using finite quotients $G/N$ of $G$.
    We show that in finitely generated  abelian groups the Brown measure can be approximated using finite quotients.
\end{abstract}

\section{Introduction}
Let $G$ be a residually finite group and
let $A \in \text{Mat}_n(\mathbb{C}[G])$
be a matrix over the complex group ring. Let
$G \unrhd N_1 \unrhd N_2  \unrhd \ldots$ be a chain of normal subgroups in $G$ of finite index with trivial 
intersection and set $G_i = G / N_i.$ By right 
multiplication we get an action of $G$ on 
$\mathbb{C}[G_i] \cong \mathbb{C}^{\vert G_i \vert}$.
This action extends linearly to $\C[G]$ and induces an action of $\Mat_n(\C[G])$ on $\C[G_i]^n$. Let $A_i \in \text{Mat}_{n\cdot\vert G_i \vert}(\mathbb{C})$
be the matrix associated to the linear operator on $\mathbb{C}[G_i]^n \cong \mathbb{C}^{n\cdot\vert G_i \vert}$ given by right multiplication by $A$ with respect to some basis.
Let $\lambda_1, \ldots, \lambda_{n\cdot \vert G_i \vert}$ be the eigenvalues of $A_i$.
We define the regularized eigenvalue measure of the matrix
$A_i$ as
\begin{equation}\label{group_reg_eig_measure}
    \mu_{A_i} = \frac{1}{\vert G_i \vert}\sum\limits_{j = 1}^{n\cdot \vert G_i \vert} \delta_{\lambda_j}
\end{equation}
where $\delta_z $ is the Dirac measure at $z \in \mathbb{C}.$
Note that in the measure the eigenvalues appear with multiplicities.
We are interested in the following questions:
\begin{itemize}
    \item[(1)] Does the limit 
    $\lim\limits_{i \to \infty} \mu_{A_i}(\{0\})$
    exist?
    \item[(2)] If the answer to question (1) is yes, is the limit independent of the chain $(N_i)_i$?
    \item[(3)] Let $\mu_A$ be the Brown measure of the operator $r_A$ on $(\ell^2(G))^n$ given by right multiplication by $A$. Do the measures $\mu_{A_i}$ converge weakly to $\mu_A$?
\end{itemize}
For the definition of the Brown measure see section \ref{Section_Brown_measure}.

We briefly want to discuss the case when $A \in \Mat_n(\C[G])$ is normal.
If $A$ is a normal matrix, the answer to all of the above questions is yes. In this case the measure $\mu_A$ is known as the \textit{spectral measure}.
By the Weierstrass approximation theorem, 
to show the weak convergence of the measures, it is enough to show
\begin{equation}\label{weak_convergence_normal}
\lim\limits_{i \to \infty}\int\limits_{\C} x^n \overline{x}^m d\mu_{A_i} = \int\limits_{\C} x^n \overline{x}^m d\mu_A.
\end{equation}
We will explain this equality in section \ref{Section_Brown_measure}. 
We can then ask about convergence of $\mu_{A_i}(\{0\}).$
By the theorem of Portmanteau the weak convergence already gives 
$$
\limsup\limits_{i \to \infty} \mu_{A_i}(\{0\}) \leq \mu_A (\{0\}).
$$
The other inequality goes back 
to Wolfgang L{\"u}ck \cite{Luck_start}
and Andrei Jaikin-Zapirain \cite{Jaikin1}.
For positive self adjoint matrices $A \in \Mat_n(\Z[G])$, L{\"u}ck discovered that since the characteristic polynomial of $A_i$ has coefficients in $\Z$, the product of all non zero eigenvalues of $A_i$ has absolute value greater then one. Further the absolute value of each eigenvalue of $A_i$ is bounded by the operator norm of $A$. Thus there can not be too many "small" eigenvalues, since otherwise there are not enough "large" eigenvalues  such that their product is greater then one. In fact he proved
the existence of a constant $B$ that only depends on $A$ such that
\begin{equation}\label{uniform_bound}
\mu_{A_i}(0, \lambda) \leq \frac{B}{|\log \lambda |}
\end{equation}
for all $\lambda \in (0, 1).$

From that, using again the theorem of Portmanteau, one can deduce that
\begin{equation}\label{measure_Equality}
    \lim\limits_{i \to \infty} \mu_{A_i}(\{0\}) = \mu_A(\{0\}).
\end{equation}
It is easy to generalize the result to normal matrices $A \in \Mat_n(\Q[G])$.
Since for any arbitrary matrix $A \in \Mat_n(\C[G])$ we have $\ker r_A = \ker r_A r_A^\ast$ we can deduce from equation \ref{measure_Equality} the so called \textit{L{\"u}ck Approximation} for arbitrary matrices $A \in \Mat_n(\Q[G])$
\begin{equation}\label{Luck_Approx}
    \lim\limits_{i \to \infty} \frac{1}{|G_i|} \dim_{\C}\ker A_i = \dim_G \ker r_A
\end{equation}
where $\dim_G$ denotes the von Neumann dimension.

Jaikin-Zapirain used a completely different approach to extend the L{\"u}ck approximation to matrices over $\C[G]$.
He uses Sylvester matrix rank functions and their natural extensions. 
From this it is possible to show that equation \ref{measure_Equality} holds also for normal matrices over $\C[G].$
In \cite{boschheidgen} the author proved the existence of some function $f: \R_+ \to \R_+$
with $\lim\limits_{\lambda \to 0} f(\lambda) = 0$ such that $\mu_{A_i} (B(0, \lambda)) < f(\lambda)$ also for the case of normal matrices with entries in $\C[G]$. Thus a uniform bound similar as in \ref{uniform_bound} does exist although the function $f$ is not known explicitly.

The problem of convergence of eigenvalues of non normal matrices has already been studied in the theory of random matrices, see \cite{sniady}.

Since for matrices over $\Q[G]$ the methods used by L{\"u}ck in \cite{Luck_start} still work in the case of non normal matrices, a positive answer to question (3) would imply a positive answer to question (1) and (2) for matrices $A \in \Mat_n(\Q[G]).$

We present an example that gives hope to a positive answer to question (3) also in
the case of non normal matrices,
at least for some groups.
Let $G =  \langle g \rangle$ be an infinite cyclic group 
and let 
$$
A = \left(\begin{matrix} g^2 + 3g & 4 \\ 
g^3 & -g^4 + g
\end{matrix}\right).
$$
Let $N_i = \langle g^{5^i}\rangle \subseteq  G $ and therefore 
$G_i \cong \mathbb{Z}/(5^i)\mathbb{Z}$ a cyclic group of order $5^i.$
Note that we get the matrix $A_i$
by replacing $g$ in $A$ by the matrix
\begin{equation}\label{Shift_matrix}
    \left(\begin{matrix}
0 & 1 & 0 & \cdots & 0 \\
\vdots & \ddots & \ddots & & \vdots\\
\vdots & & & \ddots& 0 \\
0 &  &  & \ddots& 1 \\
1 & 0 & \cdots&  & 0
\end{matrix}\right)
\end{equation}
of dimension $\vert G_i \vert = 5^i.$
The following plot shows the eigenvalues of $A_i$ for $i = 1, 2, 3:$
\newline
\includegraphics[width=4.0cm,height=4.0cm]{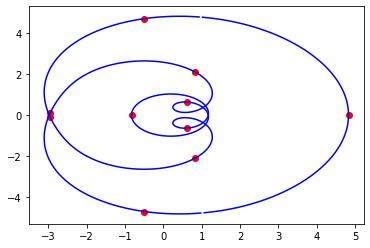}
\includegraphics[width=4.0cm,height=4.0cm]{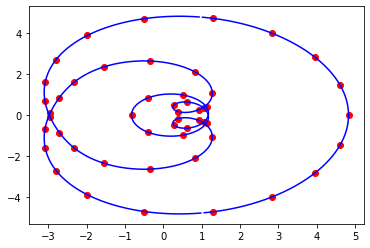}
\includegraphics[width=4.0cm,height=4.0cm]{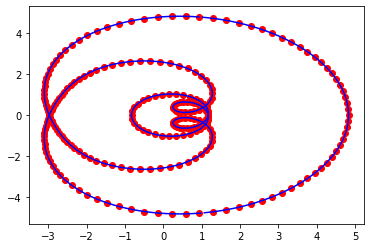}
In the graphics, the limit curve is the support of the Brown measure of the operator $r_A$. 
This will be a consequence of the next theorem.

\begin{theorem}\label{abelian_case}
Let $G$ be a finitely generated abelian group. 
Then the answer to question (1),(2), and (3) is yes.
\end{theorem}
In general, we obtain a negative answer to question (2) and therefore to question (3).
For that we consider the discrete Heisenberg group
$G = H_3(\Z)$, which can be seen as the matrix group
generated by the two matrices
$$
a = \begin{pmatrix}
1 & 1 & 0 \\
0 & 1 & 0 \\
0 & 0 & 1
\end{pmatrix}
\quad \text{and} \quad 
b = \begin{pmatrix}
1 & 0 & 0 \\
0 & 1 & 1 \\
0 & 0 & 1
\end{pmatrix}.
$$ 
Our main result will be the following.
\begin{theorem}
Let $G = H_3(\Z)$ be the Heisenberg group and let $a-b \in \Z[G]$. Let $p$ be an odd prime and set $N_i = \mathrm{Id}_3 + p^i \cdot \mathrm{Mat}_3(\Z) \cap G \unlhd G$ and consider 
the residual chain $G \unrhd N_1 \unrhd N_2 \unrhd \ldots.$ Set $G_i = G / N_i \cong H_3(\Z / p^i \Z).$  Let $A_i \in \text{Mat}_{p^{3i}}(\Z)$
be the matrix that represents the action of $a-b$ on $\C[G_i] \cong \C^{p^{3i}}.$ Let $\mu_{A_i}$ be
the regularized eigenvalue measure of $A_i$.
Then 
\begin{equation}
    \lim\limits_{i \to \infty} \mu_{A_i} (\{0\})
    = \frac{p}{p + 1}.
\end{equation}
\end{theorem}

\subsection*{Acknowledgments} This paper is partially supported by the grants MTM2017-82690-P and PID2020-114032GB-I00 of the Ministry of Science and Innovation of Spain and by the ICMAT Severo Ochoa project CEX2019-000904-S4. 
The author wants to thank Andrei Jaikin-Zapirain and Leo Margolis for helpful discussions and Henrique Souza and an anonymous referee for useful comments on the previous versions of this paper.


\section{Preliminaries}
In this article $G$ will always be 
a residually finite group and $A \in \Mat_n(\C[G])$
will be a matrix over the complex group ring (might be $1 \times 1$).
$N_i \unlhd G \:(i \in \N)$ will be a nested sequence of normal subgroups in $G$ of finite index with trivial intersection.
We will denote the quotients groups by $G_i = G / N_i.$
We will denote by $A_i \in \Mat_{n \cdot |G_i|}(\C)  $ the matrix that represents the action on $\C[G_i]^n$ given by right multiplication with $A$.
Last we will denote by 
$$
    \mu_{A_i} = \frac{1}{\vert G_i \vert}\sum\limits_{j = 1}^{n\cdot \vert G_i \vert} \delta_{\lambda_j}
$$
the regularized eigenvalue measure of $A_i$
and by $\mu_A$ the Brown measure of $r_A$.
\subsection{\texorpdfstring{$\ast$}{}-distributions}

Let $(\NN, \tau)$ be a tracial von Neumann algebra.
We can extend the trace $\tau$ to matrices
$M = (m_{i,j})$ over $\NN$ by
considering $\text{Tr}_{\C}(\tau(m_{i,j}))$.
For an element $a \in \NN$ the $\ast$-distribution of $a$ is the collection of all its $\ast$-moments $(\tau(a^{s_1}a^{s_2} \cdots a^{s_n}))$, where $s_i \in \{1, \ast\}$.
The most important examples are
$(\NN, \tau) = (\Mat_n(\C), \frac{1}{n}\text{Tr})$
and group von Neumann algebras $(\NN(G), \text{Tr}_G)$.
Here we have $\text{Tr}_G(a) = \langle 1_G a, 1_G \rangle$
where $1_G \in \ell^2(G)$ is seen as a vector and 
$a\in \NN(G)$ is seen as an operator acting from the right.

\subsection{Convergence in \texorpdfstring{$\ast$}{}-moments}
We say that a sequence of elements $a_i \in \NN_i$
in different tracial von Neumann algebras $(\NN_i,\tau_i) $
converges in $\ast$-moments to an element $a \in (\NN, \tau)$,
if for all $n$ and $s_1, \ldots, s_n \in \{1, \ast\}$
we have
$$
\lim\limits_{i \to \infty} \tau_i(a_i^{s_1} \cdots a_i^{s_n})
= \tau(a^{s_1} \cdots a^{s_n})
$$
\begin{lemma}
The matrices $A_i$ converge in $\ast$-moments towards the operator $r_A$.
\end{lemma}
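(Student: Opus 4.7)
The plan is to unwind both traces to the level of coefficients in $\C[G]$ and $\C[G_i]$ and exploit the fact that $\bigcap_i N_i = \{e_G\}$ so that, for each fixed $\ast$-moment, the support of the expression sees no "accidental collisions" with the identity in $G_i$ once $i$ is large enough. The central observation is that the trace $\tau_G$ on $\NN(G)$ applied to an element $x = \sum_g \alpha_g g \in \C[G]$ returns the identity coefficient $\alpha_{e_G}$, while the trace $\tau_{G_i}$ on $\NN(G_i) \cong \Mat_{|G_i|}(\C)$ applied to the image $\pi_i(x) \in \C[G_i]$ equals $\sum_{g \in N_i}\alpha_g$, because all elements of $N_i$ project to the identity of $G_i$.

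First I would fix $m \in \N$ and $s_1, \ldots, s_m \in \{1, \ast\}$ and note that, since $\pi_i\colon \C[G] \to \C[G_i]$ is a $\ast$-ring homomorphism, the induced map $\Mat_n(\C[G]) \to \Mat_n(\C[G_i])$ sends $A^{s_1}\cdots A^{s_m}$ to $A_i^{s_1}\cdots A_i^{s_m}$ (interpreting the latter, via the natural identification $\Mat_n(\C[G_i]) \cong \Mat_{n|G_i|}(\C)$, as the matrix described in the introduction). Writing
\[
\bigl(A^{s_1}\cdots A^{s_m}\bigr)_{k,k} = \sum_{g \in G} \beta^{(k)}_g\, g \in \C[G],
\]
the definitions in the preliminaries give
\[
\tau_G\bigl(A^{s_1}\cdots A^{s_m}\bigr) = \sum_{k=1}^n \beta^{(k)}_{e_G}, \qquad \tau_{G_i}\bigl(A_i^{s_1}\cdots A_i^{s_m}\bigr) = \sum_{k=1}^n \sum_{g \in N_i}\beta^{(k)}_g.
\]

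The key step is then the finiteness of supports: since the entries of $A$ lie in $\C[G]$, their supports are finite, and by induction on $m$ the union $S$ of the supports of the diagonal entries of $A^{s_1}\cdots A^{s_m}$ is still a finite subset of $G$. Using $\bigcap_i N_i = \{e_G\}$ together with $N_1 \supseteq N_2 \supseteq \cdots$, the finite set $S \setminus \{e_G\}$ intersects $N_i$ trivially for all sufficiently large $i$. For such $i$ the two traces agree exactly, not merely in the limit. The main (very mild) obstacle is just bookkeeping — keeping matrix indices and the involution aligned — but since $\pi_i$ is a $\ast$-homomorphism and the trace on $\Mat_n(\NN)$ is defined entrywise on the diagonal, the argument goes through uniformly over all choices of $s_1, \ldots, s_m$, which yields convergence of every $\ast$-moment.
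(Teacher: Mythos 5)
Your proof is correct and follows essentially the same route as the paper's: both identify $\tau_G$ of a group-ring element with its identity coefficient, observe that the trace over $G_i$ picks up exactly the coefficients supported on $N_i$, and use the finite support of each word in $A, A^\ast$ together with $\bigcap_i N_i = \{e_G\}$ to conclude. You carry out the matrix-index and word-level bookkeeping explicitly (and note that the traces agree exactly for large $i$), where the paper simply reduces to the case $n=1$ and a single element, but the underlying argument is identical.
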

\begin{proof}
For simplicity we will only consider the case $n = 1$, that 
means $A = \sum\limits_{g\in G} a_g g \in \C[G]$
with $a_g = 0$ for almost all $g \in G.$
Note that we have
\begin{align*}
\text{Tr}_G (A) &= \langle 1_G A, 1_G \rangle\\
&= \langle \sum\limits_{g \in G} a_G g, 1_G \rangle \\
&=  \sum\limits_{g \in G} a_G \langle g, 1_G \rangle \\
&= a_{1_G}.
\end{align*}
Note further that each $g \in G$ acts  on $\C[G_i] \cong \C^{|G_i|}$ by permutation
of the elements of $G_i \cong G / N_i$. Thus if we have $xg = x$ for some $x \in G_i$ we obtain $xg = x$
for all $x \in G_i$ and therefore $g \in N_i.$
Thus $g \in G$ contributes to the trace of $A_i$
if and only if $g \in N_i$.
Since the intersection of all the $N_i$ is trivial
we get
$$
\lim\limits_{i \to \infty} \frac{1}{|G_i|} \text{Tr}_{\C} A_i = a_{1_G}. 
$$

\end{proof}
\subsection{Brown Measure}\label{Section_Brown_measure}
\subsubsection{Definition and Properties}
For an invertible element $a$ in a tracial von Neumann algebra $(\NN, \tau)$
we define its Fuglede-Kadison determinant as
\begin{equation*}
    \Delta(a) = \exp(\tau(\log |a|) \in (0, \infty),
\end{equation*}
where $|a| = (aa^\ast)^{\frac{1}{2}}.$
For arbitrary elements $a \in \NN$
we set
\begin{equation*}
    \Delta(a) = \lim\limits_{\epsilon \to 0}
    \exp(\tau(\log (aa^\ast + \epsilon)^{\frac{1}{2}}).
\end{equation*}
The function $f(\lambda) = \log(\Delta(a - \lambda))$ is subharmonic (see \cite[Chapter 11]{mingo2017free}). Therefore we can define a measure
$\mu_a$  on the complex numbers by
$$
\int\limits_{\C} \varphi \,d\mu_a = \frac{1}{2\pi}\int\limits_{\R^2} f(\lambda) \nabla^2 \varphi(\lambda) \,d\lambda_r d\lambda_i
$$
for all $\varphi \in C_c^\infty (\R^2)$.
Here $\nabla^2 = \frac{\delta^2}{\delta \lambda_r^2} + \frac{\delta^2}{\delta \lambda_i^2}$ is the Laplace operator, where $\lambda_r$ and $\lambda_i$ are the real and imaginary part of $\lambda = \lambda_r + i\lambda_i \in \C.$
The measure $\mu_a$ is called
the Brown measure associated to $a$.
It is a
 measure on the complex numbers whose support is contained in 
the spectrum of $a$.
For 
$(\NN, \tau) = (\Mat_n(\C), \frac{1}{n} \text{Tr})$
the Brown measure is just the normalized eigenvalue measure.
For normal elements $a$ the Brown measure is just the complex spectral measure $\tau \circ E_a$,
where $E_a$ is the projection valued spectral measure
associated to the normal operator $a$.
For more details see \cite{mingo2017free}.
One important property of the Brown measure is 
that for all $n\in \N$ we have
\begin{equation}\label{polynomial_moments}
\int\limits_{\C} x^n d\mu_a = \tau(a^n).
\end{equation}
In particular we obtain the following result.
\begin{proposition}
For all polynomials $f$ we have
$$
\lim\limits_{i \to \infty} \int\limits_{\C}
f d \mu_{A_i} = \int\limits_{\C}
f d \mu_A
$$
\end{proposition}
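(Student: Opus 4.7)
The plan is to reduce to monomials by linearity of the integral, and then turn both sides into traces via the moment formula \eqref{polynomial_moments} and invoke the previous lemma on convergence of $\ast$-moments. Since the maps $f \mapsto \int f \, d\mu_i$ and $f \mapsto \int f \, d\mu_A$ are $\C$-linear, it suffices to prove
$$
\lim_{i \to \infty} \int_{\C} x^k \, d\mu_i \;=\; \int_{\C} x^k \, d\mu_A
$$
for every $k \in \N$.

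For the right-hand side, view $A$ as an element of the tracial von Neumann algebra $(\Mat_n(\NN(G)),\, \tau)$ with $\tau = \mathrm{Tr}_G \otimes \mathrm{Tr}_\C$. Then \eqref{polynomial_moments} gives $\int_{\C} x^k \, d\mu_A = \tau(A^k)$. For the left-hand side, view $A_i$ as an element of $\Mat_n(\NN(G_i)) \cong \Mat_{n|G_i|}(\C)$ equipped with the trace $\tau_i = \frac{1}{|G_i|}\mathrm{Tr}_\C$ (this is the normalization that matches $\mathrm{Tr}_{G_i} \otimes \mathrm{Tr}_\C$ on the matrix side, and it is the one under which $\mu_i$ is precisely the spectral/eigenvalue measure of $A_i$). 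With this normalization,
$$
\int_{\C} x^k \, d\mu_i \;=\; \frac{1}{|G_i|} \sum_{j=1}^{n|G_i|} \lambda_j^k \;=\; \tau_i(A_i^k),
$$
so the claim reduces to the single statement $\tau_i(A_i^k) \to \tau(A^k)$ for all $k \in \N$.

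This last convergence is exactly the moment-word $s_1 = \cdots = s_k = 1$ in the preceding lemma. The lemma was written out only for $n=1$, but extension to matrices is immediate: each diagonal entry $(A^k)_{\ell\ell}$ is a fixed element of $\C[G]$ obtained as a sum of products of the entries of $A$, and reduction modulo $N_i$ sends it to the corresponding diagonal entry of $A_i^k$; applying the $n=1$ case to each such coefficient and summing over $\ell$ yields
$$
\tau_i(A_i^k) \;=\; \sum_{\ell=1}^{n} \tau_i\bigl((A_i^k)_{\ell\ell}\bigr) \;\longrightarrow\; \sum_{\ell=1}^{n} \mathrm{Tr}_G\bigl((A^k)_{\ell\ell}\bigr) \;=\; \tau(A^k).
$$

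There is no real obstacle here beyond bookkeeping; the only point that requires care is matching the trace normalizations on the two sides, i.e.\ making sure that the ``regularized'' (non-normalized) mass $n$ of $\mu_i$ is paired with the non-normalized trace $\tau_i = \frac{1}{|G_i|}\mathrm{Tr}_\C$ rather than with the tracial-state $\frac{1}{n|G_i|}\mathrm{Tr}_\C$, so that the moment identity \eqref{polynomial_moments} applies cleanly on both sides.
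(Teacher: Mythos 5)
Your argument is correct and follows essentially the same route as the paper: reduce to monomials $x^k$ by linearity, identify $\int x^k\,d\mu_i$ and $\int x^k\,d\mu_A$ with traces of powers via \eqref{polynomial_moments}, and conclude from the convergence in $\ast$-moments (the word $s_1=\cdots=s_k=1$). The extra bookkeeping you supply on trace normalization and the matrix case is detail the paper leaves implicit, not a different method.
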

\begin{proof}
Since integrals are linear it is enough to show
$$
\lim\limits_{i \to \infty} \int\limits_{\C}
x^n d \mu_{A_i} = \int\limits_{\C}
x^n d \mu_A
$$
for all $n$.
Now the result follows from the convergence in $\ast$-moments of the matrices $A_i$
and \ref{polynomial_moments}.
\end{proof}
Remember that if $a\in \NN$ is normal we have
\begin{equation}\label{mixed_moments}
\int\limits_{\C} x^n \bar{x}^m d\mu_a = \tau(a^n (a^\ast)^m)
\end{equation}
for all $n, m \in \N$. 
Using this and the Stone-Weierstrass approximation Theorem it follows immediately that for normal elements the convergence of $\ast$-moments
implies the weak convergence of spectral measures. However equality \ref{mixed_moments} does not hold for arbitrary elements $a \in \NN.$

\subsubsection{Discontinuity of Brown measure}
Consider an infinite cyclic group $G = \langle g \rangle.$
Since $g\in \NN(G)$ is unitary, especially normal, its $\ast$-distribution
is given by
$$
\text{Tr}_G(g^n (g^{-1})^m) = \begin{cases}
1 \quad \text{if} \quad m = n \\
0 \quad \text{otherwise}
\end{cases}
$$
Let us consider now the sequence $(M_n)$ given by
$$
M_n = 
\begin{pmatrix}
0 & 1 & &\\
& \ddots &\ddots&\\
& & 0 & 1\\
& & & 0
\end{pmatrix} \in \Mat_n(\C).
$$
In $\ast$-moments these matrices converge to $g$ whose Brown/spectral measure is the uniform distribution on the unit circle.
Since the only eigenvalue of $M_n$ is $0$, the eigenvalue measure 
$M_n$ is the Dirac measure at $0$.
However this example is kind of artificial in the sense that we approximate a group element by a sequence of nilpotent matrices. When we use finite quotients, group elements are approximated by permutation matrices.

The problem of the non convergence of Brown measures 
arises from the fact that the eigenvalues of non normal matrices are not stable under small perturbations.
For example, if one changes in the matrices $M_n$ the entry in the lower left corner to a small positive constant $\epsilon,$ their eigenvalue distribution would converge to the uniform distribution on the unit circle, but the limit of the $\ast$-moments would not change at all.
Our counterexample in section (\ref{Heisenberg_Case}) will work in a similar way.

\section{The Abelian Case}
In this section we want to prove Theorem \ref{abelian_case}.
For simplicity we will only consider the case $G \cong \Z$. At the end of the chapter we will explain briefly the necessary changes for the general case.
Let $\{N_m\}$ be a chain of normal subgroups in $G = \langle t \rangle$ with trivial intersection
and set $G_m = G/N_m$.
Each $G_m$ is a cyclic group of finite order. Put
$n_m = |G_m|$ and
consider the $n_m$ dimensional $\C$ vector space $V_m = \C^{n_m} \cong \C[G_m]$.
The action of $G$ on 
$V_m$ is completely determined by the action of $t$ on $V_m$.
Since $t$ acts as a shift on $\C[G_m]$, the action of $t$ on $V_m$ is given by right multiplication with the matrix 
\begin{displaymath}
    T_m = \left(\begin{matrix}
0 & 1 & 0 & \cdots & 0 \\
\vdots & \ddots & \ddots & & \vdots\\
\vdots & & & \ddots& 0 \\
0 &  &  & \ddots& 1 \\
1 & 0 & \cdots&  & 0
\end{matrix}\right).
\end{displaymath}
Thus we obtain algebra homomorphisms
$\rho_m: \C[G] \to \Mat_{n_m}(\C)$ by sending $t$ to $T_m$
and extending linearly.
Obviously we can extend $\rho_m$ to matrices, that means we have
$\rho_m: \Mat_n(\C[G]) \to \Mat_{n \cdot n_m}(\C).$
Let now $A \in \text{Mat}_n(\C[G])$
and $A_m = \rho_m(A)$.
Since $\C[G]$ is abelian, we can define the determinant on
$\Mat_n([\C[G])$. Having a determinant we can also define the characteristic polynomial of $A$
\begin{displaymath}
    p(y) = \det (y\Id_n - A) \in \C[G][y].
\end{displaymath}
By identifying $\C[G]$ with $\C[t^{\pm 1}]$
we obtain a polynomial
$p(t,y) \in \C[t^{\pm 1}, y]$.

\begin{proposition}\label{char_pol_for_abe_groups}
 The characteristic polynomial of $A_m$ is given by 
 $\chi_m(y) = \prod\limits_{\zeta^{n_m} = 1} p(\zeta, y)$
\end{proposition}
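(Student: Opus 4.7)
The strategy is to simultaneously diagonalize the action of $t$ on $V_m$ and read off the resulting block decomposition of $A_m$. This reduces the characteristic polynomial of a large matrix to the product of characteristic polynomials of the $n_m$ smaller matrices $A(\zeta)$ obtained by specializing $t \mapsto \zeta$.

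First I would analyze the shift matrix $T_m$. Its characteristic polynomial is $y^{n_m} - 1$, with eigenvalues the $n_m$-th roots of unity, and the Fourier vectors $v_\zeta = (1, \zeta, \zeta^2, \ldots, \zeta^{n_m-1})^{\top}$ form a basis of eigenvectors. Equivalently, the $\C[G]$-module $V_m$ decomposes as
$$
V_m \;\cong\; \bigoplus_{\zeta^{n_m} = 1} \C_\zeta,
$$
where $\C_\zeta$ is the one-dimensional representation on which $t$ acts by multiplication by $\zeta$. Tensoring with $\C^n$ gives a decomposition of $V_m^n$ as a $\C[G]$-module into $\bigoplus_{\zeta} \C_\zeta^n$, and this decomposition is preserved by the right action of $A$.

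Next I would identify the action of $A$ on the $\zeta$-summand. Since $A$ has entries in $\C[G] = \C[t^{\pm 1}]$ and each entry $a_{ij}(t)$ acts on $\C_\zeta$ by multiplication by $a_{ij}(\zeta)$, the action of $A$ on $\C_\zeta^n$ is given by the matrix $A(\zeta) \in \Mat_n(\C)$ obtained by substituting $t \mapsto \zeta$ entrywise. Consequently, in an appropriate basis, $A_m$ is conjugate to the block-diagonal matrix $\bigoplus_{\zeta^{n_m}=1} A(\zeta)$.

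Finally I would take determinants:
$$
\chi_m(y) \;=\; \det(y\Id_{n \cdot n_m} - A_m) \;=\; \prod_{\zeta^{n_m} = 1} \det\!\bigl(y\Id_n - A(\zeta)\bigr).
$$
Because the determinant is a universal polynomial in matrix entries, the substitution $t \mapsto \zeta$ commutes with taking $\det(y\Id_n - \cdot)$, so $\det(y\Id_n - A(\zeta)) = p(\zeta, y)$, yielding the claim. The only point requiring care is the commutation of substitution with the determinant, which is routine since the determinant formula is the same universal polynomial in both $\C[G]$ and $\C$; there is no genuine obstacle, as the argument is essentially the discrete Fourier transform applied to matrices over a cyclic group algebra.
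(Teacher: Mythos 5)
Your argument is correct, but it takes a different route from the paper. The paper invokes Silvester's theorem ($\det_{\C}(M) = \det_{\C}(\det_R(M))$ for $M$ a matrix over a commutative subring $R \subseteq \Mat_{n_m}(\C)$): it first forms the characteristic polynomial $p(t,y) = \det(y\Id_n - A)$ over the commutative ring $\C[G]$, applies $\rho_m$, and only then diagonalizes $T_m$ to evaluate $\det_{\C}(\rho_m(p(t,y))) = \prod_{\zeta^{n_m}=1} p(\zeta,y)$. You instead decompose the module first: you diagonalize $T_m$ via the discrete Fourier transform, obtain $V_m \cong \bigoplus_{\zeta} \C_\zeta$, observe that $A_m$ is conjugate to $\bigoplus_{\zeta} A(\zeta)$, and then take characteristic polynomials of the small blocks. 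The two computations are dual to one another (you swap the order of ``take determinant over the commutative ring'' and ``decompose into eigenspaces''), and both ultimately rest on the diagonalizability of the shift matrix. What your version buys is self-containedness: it needs no external determinant lemma, and the block decomposition $A_m \sim \bigoplus_\zeta A(\zeta)$ makes the formula $\mu_{A_m} = \frac{1}{n_m}\sum_{\zeta^{n_m}=1} \mu_{p(\zeta,y)}$ for the eigenvalue measure immediate. What the paper's version buys is brevity and the explicit appearance of $p(t,y)$ as an element of $\C[t^{\pm 1},y]$, which is the object used afterwards to define the limit measure. Your closing remark that evaluation $t \mapsto \zeta$ commutes with $\det(y\Id_n - \cdot)$ is the right point to flag, and it is indeed routine since $\det$ is a universal polynomial in the entries.
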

For the proof the following lemma will be helpful.
\begin{lemma}\cite[Theorem 1]{Silvester}
Let $R$ be a commutative subring of $\mathrm{Mat}_n(\C)$ and let $M \in \mathrm{Mat}_m(R)$. Then $\det_{\C}(M) = \det_{\C}(\det_R(M)).$
\end{lemma}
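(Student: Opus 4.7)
The plan is to reduce to the case where $R$ consists of upper-triangular matrices by simultaneous triangularization, and then compute both determinants directly via the diagonal characters of $R$.

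First, observe that both sides of the claimed identity are invariant under replacing $R$ by $PRP^{-1}$ for any invertible $P \in \Mat_n(\C)$. Writing elements of $\Mat_m(R) \subseteq \Mat_{mn}(\C)$ in block form, the entrywise conjugation $M = (M_{ij}) \mapsto (PM_{ij}P^{-1})$ coincides with conjugation of the flattened $mn \times mn$ matrix by the block-diagonal $Q = \mathrm{diag}(P, \ldots, P)$, so $\det_\C$ is preserved. The determinant over $R$ transforms as $\det_R(M) \mapsto P \det_R(M) P^{-1}$, which again has the same $\det_\C$. So we may freely conjugate $R$ inside $\Mat_n(\C)$.

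Since $R$ is a commutative collection of matrices over $\C$, a standard inductive argument (commuting complex operators share an eigenvector) produces $P$ such that $PRP^{-1}$ consists of upper-triangular matrices. After this reduction, each block entry $M_{ij}$ is upper-triangular. For $k \in \{1, \ldots, n\}$ let $\phi_k : R \to \C$ extract the $(k,k)$-entry; since diagonals of upper-triangular matrices multiply entrywise, $\phi_k$ is a ring homomorphism. Extending entrywise yields ring homomorphisms $\phi_k : \Mat_m(R) \to \Mat_m(\C)$.

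Now flatten $M$ to an $mn \times mn$ matrix, indexing rows and columns by pairs $(i,a)$ with $i \in \{1,\ldots,m\}$ and $a \in \{1,\ldots,n\}$, so that the $((i,a),(j,b))$-entry is $(M_{ij})_{a,b}$. Because every $M_{ij}$ is upper-triangular this entry vanishes for $a > b$. Reorder the basis so that $a$ becomes the outer coordinate; then $M$ is block upper-triangular with $n$ diagonal blocks of size $m \times m$, the $k$-th of which is exactly $\phi_k(M)$. Hence
$$
\det_\C(M) = \prod_{k=1}^n \det_\C(\phi_k(M)).
$$
On the other hand $\det_R(M) \in R$ is upper-triangular, and applying $\phi_k$ to the Leibniz expansion gives $\phi_k(\det_R(M)) = \det_\C(\phi_k(M))$, so
$$
\det_\C(\det_R(M)) = \prod_{k=1}^n \phi_k(\det_R(M)) = \prod_{k=1}^n \det_\C(\phi_k(M)),
$$
matching $\det_\C(M)$.

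The only substantive input is simultaneous triangularization of a commutative matrix family over $\C$; once that is in place the identity reduces to bookkeeping about how the two tensor factors $\Mat_m$ and $\Mat_n$ interact. I expect the main obstacle to be precisely this bookkeeping: one must check carefully that the basis permutation swapping the two factors converts the ``each block is triangular'' condition into a genuine block upper-triangular form on the flattened matrix, so that $\det_\C(M)$ truly factors as a product of $\det_\C(\phi_k(M))$ rather than being contaminated by off-block-diagonal terms.
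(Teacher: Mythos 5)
Your proof is correct. All the key steps check out: conjugation invariance of both sides (conjugating each block by $P$ is conjugation of the flattened matrix by $\mathrm{Id}_m \otimes P$, and $r \mapsto PrP^{-1}$ is a ring isomorphism carrying $\det_R(M)$ to $\det_{PRP^{-1}}$ of the conjugated block matrix); simultaneous upper-triangularization of the commuting family $R$ over $\C$; the fact that each diagonal-entry functional $\phi_k$ is then a ring homomorphism on $R$, hence commutes with the Leibniz expansion, giving $\phi_k(\det_R(M)) = \det_\C(\phi_k(M))$; and the basis reordering that turns ``every block is upper-triangular'' into a genuinely block upper-triangular $mn \times mn$ matrix with diagonal blocks $\phi_1(M), \ldots, \phi_n(M)$. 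The bookkeeping worry you flag at the end is exactly the point that needs care, and your indexing argument (entry $((i,a),(j,b)) = (M_{ij})_{ab}$ vanishes for $a>b$, so grouping by the outer index $a$ yields block triangularity) settles it.

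Note, however, that the paper does not prove this lemma at all: it is quoted as Theorem 1 of the cited reference of Silvester, whose proof is quite different in spirit. Silvester argues by induction on the block size $m$, using block row reduction (Schur-complement style) when a block is invertible and a polynomial perturbation argument to remove the invertibility hypothesis; that argument is purely algebraic and works over any field, indeed essentially over any commutative ring. Your proof instead exploits the specific base field $\C$, since simultaneous triangularization requires algebraic closedness. What you gain is a short, self-contained argument that is entirely adequate for the paper's application --- there $R$ is generated by the permutation matrix $T_m$, which is even diagonalizable, so your reduction becomes simpler still --- while the cited proof buys generality over arbitrary fields, which the paper never needs.
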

\begin{proof}[Proof of the Proposition \ref{char_pol_for_abe_groups}]
  Using the previous lemma, we just have to calculate
  the characteristic polynomial of
  $\rho_m(p(t, y)),$ where $p(t,y) \in \C[G][y]$ is the characteristic polynomial of $A$.
  Since the matrix $T_m$ is conjugated to the diagonal matrix with the $n_m$-th roots of unity on the diagonal the result follows.
\end{proof}
Note that for each $\zeta \in \C$ the polynomial $p(\zeta, y)$ is of fixed degree $n$.
For a polynomial $f\in \C[y]$ with roots $\lambda_1, \ldots, \lambda_n$ counted with multiplicities let us denote by $\mu_f$ the uniform measure on $\{\lambda_1, \ldots, \lambda_n\}.$
With this notation we can write the normalized eigenvalue measure of $A_m$ as
\begin{equation}
    \mu_{A_m} = \frac{1}{n_m} \sum\limits_{\zeta^{n_m}= 1} \mu_{p(\zeta,y)}
\end{equation}

We can define a limit measure $\mu$ by
\begin{equation}\label{abelian_limit_measure}
    \mu = \frac{1}{2\pi} \int\limits_{S^1}\mu_{p(\zeta, y)}d\zeta.
\end{equation}
That means that for a Borel set $K \subseteq \C$
we have 
\begin{displaymath}
    \mu(K) =\frac{1}{2\pi} \int\limits_{S^1} \mu_{p(\zeta, y)}(K) d\zeta.
\end{displaymath}
The following lemma follows directly from \cite[Theorem 5.6]{DYKEMA20163403}.
\begin{lemma}
    The measure $\mu$ from equation \ref{abelian_limit_measure} is exactly the Brown measure $\mu_A$ of the operator $r_A$ on $\ell^2(G)^n$ given by right multiplication with $A$.
\end{lemma}
We now want to show that the measures $\mu_{A_m}$ converge 
weakly towards the measure $\mu$. We will need the following lemma.
\begin{lemma}\cite[Theorem B]{roots_continuous}\label{roots_cont}
Consider a polynomial 
$$
a(x) = x^n + a_1 x^{n-1} + \ldots + a_n = \prod\limits_{i = 1}^s (x - \lambda_i)^{m_i}\in \C[x]
$$
for distinct $\lambda_1, \ldots, \lambda_s.$ Let $\epsilon > 0$ be given such that for $i \neq j$ we have
$B(\lambda_i, \epsilon) \cap B(\lambda_j , \epsilon) = \emptyset.$ Then there exists $\delta > 0$ such that for
$b_i \in B(a_i, \delta)$ for all $i$ the polynomial 
$$
b(x)= x^n + b_1 x^{n-1} + \ldots + b_n 
$$
has exactly $m_j$ roots in $B(\lambda_j, \epsilon)$ for each $j$ where multiplicities are counted.
\end{lemma}

\begin{theorem}
 The measures $\mu_{A_m}$ converge weakly to the Brown measure $\mu_A$.
\end{theorem}
\begin{proof}
For $\zeta \in S^1$ let us denote by $h(\zeta) = (h_1(\zeta), \ldots, h_n(\zeta))$ the roots of $p(\zeta, y)$ counted with multiplicities.
By Lemma \ref{roots_cont}, for each $\zeta$ we can order the roots of $p(\zeta, y)$ in a way such that $h(\zeta)$ as a function 
$h: S^1 \to \C^n$ is continuous.
Let now $f: \C \to \C$ be a continuous function with compact support.
We have
$$
\int\limits_{\C} f d\mu = 
\frac{1}{2\pi}\int\limits_{S^1} \sum_{i = 1}^n f(h_i(\zeta))d\zeta = 
\lim\limits_{m \to \infty} \sum\limits_{\zeta^{n_m} = 1} \sum\limits_{i = 1}^n f(h_i(\zeta)) = 
\lim\limits_{m \to \infty} \int_{\C} f d\mu_{A_m}
$$
since the functions $f(h_i(\zeta))$ are continuous.
\end{proof}

This answers question (3) from the introduction.
The following Theorem answers questions (1) and (2).
\begin{theorem}
    Let $\lambda \in \C$. Then
    \begin{displaymath}
        \lim\limits\limits_{m \to \infty} \mu_{A_m}(\{\lambda\}) = \mu_A (\{\lambda\}).
    \end{displaymath}
\end{theorem}

\begin{proof}
    Note that for polynomials $f_1, f_2 \in \C[y]$ we have
    $\mu_{f_1 \cdot f_2} = \mu_{f_1} + \mu_{f_2}.$
    Remember that we denoted by $p(t, y) \in \C[t^{\pm 1}, y]$ the characteristic polynomial of the matrix $A \in \Mat_n(\C[G]).$
    We can write $p(t, y) = (y - \lambda)^r \cdot p'(t, y)$ where $p'$ has $y$-degree equal to $n - r$ and $(y-\lambda)$ does not divide $p'(t, y).$
    For a polynomial $f \in \C[y]$ and $c \in \C$
    let us denote by $m_f(c)$ the multiplicity with which $c$ appears as a root of $f.$
    
    By definition we have
    \begin{multline*}
        \mu(\{\lambda \}) =  \frac{1}{2\pi} \int\limits_{S^1}\mu_{p(\zeta, y)}(\{\lambda\})d\zeta = 
        \frac{1}{2\pi}\int\limits_{S^1}\mu_{p'(\zeta, y)} + \mu_{(y -\lambda)^r}(\{\lambda\})d\zeta = \\
        \frac{1}{2\pi} \int\limits_{S^1} m_{p'(\zeta, y)}(\lambda) d\zeta + r      
        \leq \frac{1}{2\pi} (n-r)\int\limits_{\substack{\zeta \in S^1\\ p'(\zeta, \lambda) = 0}} 1 \: d\zeta  + r = r
   \end{multline*}
   Here the last equality holds since we are integrating over a finite set which has Lebesgue measure $0.$
   Obviously for each $\zeta \in S^1$ we have
   $\mu_{p(\zeta, y)}(\{\lambda\}) \geq r.$
   By the theorem of Portmanteau we have
   $\limsup\limits_{m \to \infty} \mu_{A_m}(\{\lambda\}) \leq \mu (\{\lambda\}).$
   Putting everything together we obtain
   \begin{displaymath}
       r \leq \limsup\limits_{m \to \infty} \mu_{A_m}(\{\lambda\}) \leq \mu (\{\lambda\}) \leq r.
   \end{displaymath}
   and therefore $\lim\limits_{m \to \infty} \mu_{A_m}(\{\lambda\}) = \mu (\{\lambda\})$
\end{proof}

Let us briefly explain what needs to be done in the case if $G$ is an arbitrary finitely generated abelian group.
If $G \cong \Z^k$ and $A \in \Mat_n(\C[G])$, we can identify
$\C[G]$ with $\C[t_1^{\pm 1}, \ldots, t_k^{\pm 1}]$ 
and obtain a characteristic polynomial $p \in \C[t_1^{\pm 1}, \ldots, t_k^{\pm 1}, y]$ of $A$.
Then for every vector $\zeta = (\zeta_1, \ldots, \zeta_k)$
we obtain a polynomial $p(\zeta_1, \ldots, \zeta_k, y) \in \C[y]$.
To obtain the limit measure as in equation \ref{abelian_limit_measure} we need to integrate over the $k$-dimensional torus $T^k \cong S^1 \times \ldots \times S^1.$
If $G$ is not free abelian but also has a torsion part we 
can write $G \cong \Z^k \times H$ where $H$ is a finite abelian group. Considering our chain $(N_m)_{m \in \N}$ of normal subgroups with trivial intersection there exists $l \in \N$ such that $N_{l} \cap H = \{1\}$ and therefore
$N_l \cong \Z^k.$ Put $L = N_l.$ 
We can consider $\C[G]$ as a right $\C[L]$-module and to the matrix $A \in \Mat_n(\C[G])$ we can associate a matrix 
$\tilde{A} \in \Mat_{n \cdot |G : L|}(\C[L])$ that mirrors the action of $A$ on $\C[G]^n$ seen as a $\C[L]$-module.
We then have $\mu_A = \frac{1}{|G : L|}\mu_{\tilde{A}}$.
Since for $i > l$ we have $N_i \leq N_l$ we also obtain
$\mu_{A_i} = \frac{1}{|G:L|}\mu_{\tilde{A_i}}$.
Thus the case when $G$ has torsion follows from the result for $L$.

\section{A counterexample in the Heisenberg group}\label{Heisenberg_Case}
Let $R = \Z$ or $ = \Z/n\Z$ for some $n \in \N$.
We denote by $G = H_3(R)$ the subgroup of $\text{GL}_3(R)$, generated by matrices 
$a, b$ given by
\begin{equation*}
a = 
    \begin{pmatrix}
    1 & 1 & 0 \\
    0 & 1 & 0 \\
    0 & 0 & 1
    \end{pmatrix}
    \text{ and }
    b = \begin{pmatrix}
    1 & 0 & 0 \\
    0 & 1 & 1 \\
    0 & 0 & 1
    \end{pmatrix}
\end{equation*}
Note that 
\begin{equation*}
    [a, b] = a^{-1}b^{-1}ab = c = \begin{pmatrix}
    1 & 0 & 1 \\
    0 & 1 & 0 \\
    0 & 0 & 1
    \end{pmatrix}
\end{equation*}
and
 \begin{equation*}
    [G , G] = \left\{\begin{pmatrix}
    1 & 0 & \ast \\
    0 & 1 & 0 \\
    0 & 0 & 1
    \end{pmatrix}\right\} = \text{Z}(G).
\end{equation*}
The group $H_3(\Z)$ is called the \textit{discrete Heisenberg group}.

\subsection{Irreducible representations of
\texorpdfstring{$G = H_3(\mathbb{Z} / n\mathbb{Z})$}{}}\label{irred_repr_of_Heis}
In this section we will present the complex irreducible representations of 
$G = H_3(\mathbb{Z} / n\mathbb{Z})$ for $n \in \mathbb{N}$ up to equivalence.
A proof that these are all equivalence classes of irreducible representations can be found in \cite{Grassberger}.
We will describe the equivalence classes of irreducible representations just by giving the images of the matrices $a, b, c$.
Let $k \in \{0, 1, \ldots n-1\}$ and $\omega = \text{e}^{2 \cdot \pi i \cdot \frac{1}{n}}$.
Let $d_1 = \text{gcd}(n, k)$ and $d_2 = \frac{n}{d_1}$.
Then we have $(\omega^k)^{d_2} = 1$ and we get a $d_2$ dimensional irreducible representation 
by mapping
\begin{multline}\label{description_irred_rep}
    c \mapsto \omega^k \cdot  \text{Id}_{d_2}, \quad
    a \mapsto 
    \begin{pmatrix}
    0 & 1 & 0 & \ldots & 0 \\
    0 & 0 & 1 & \ldots & 0 \\
    \vdots & \vdots & \ddots  & \ddots & 0 \\
    0 & 0 &  0 & \ldots & 1 \\
    \omega^{r\cdot d_2} & 0 & 0 & \ldots & 0
    \end{pmatrix}, \\
    b \mapsto \omega^s \cdot
    \begin{pmatrix}
    1 & 0  & \ldots & 0 \\
    0 & \omega^k & \ldots & 0\\
     \vdots&  & \ddots  &  \vdots \\
     0&  \ldots&   0&  \omega^{(d_2-1)\cdot k}
    \end{pmatrix}
\end{multline}
with $r, s \in \{0, 1, \ldots, d_1 -1 \}$.
We will call the irreducible representation with these parameters
$\rho_{k, r, s}.$

\subsection{An almost nilpotent element in \texorpdfstring{$\C[H_3(\mathbb{Z} / n\mathbb{Z})]$}{}}
In this section we want to use the irreducible representations of $G = H_3(\mathbb{Z} / n\mathbb{Z})$ to present an element in the integral group ring $\Z[G]$ that is almost nilpotent. Here by almost nilpotent we mean that the eigenvalue $\lambda = 0$ has a large algebraic multiplicity.
\begin{proposition}
Let $n \in \N$ and
$G = H_3(\mathbb{Z} / n\mathbb{Z}).$ Let  $\rho_{k, r, s}$ be an irreducible 
representation of $G$, using the above notation. Let $d_1 = \gcd(n, k)$ and $d_2 = \frac{n}{d_1}.$
Then 
\begin{equation*}
\rho_{k,r,s}\left((a-b)^n\right) = 
((-1)^{d_2 -1}\omega^{(s-r) \cdot {d_2}} - 1)\cdot \mathrm{Id}_{d_2}
\end{equation*}
\end{proposition}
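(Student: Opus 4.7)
The plan is to exploit a $q$-commutation between $A := \rho_{k,r,s}(a)$ and $B := \rho_{k,r,s}(b)$ and deduce a $q$-Frobenius identity that reduces $(A-B)^n$ to a scalar matrix. From the defining relation $ab = bac$ of $H_3$ and $\rho_{k,r,s}(c) = \omega^k\,\mathrm{Id}_{d_2}$, one reads off $AB = \omega^k BA$. Equivalently, setting $X = A$, $Y = -B$ and $q = \omega^{-k}$, the relation becomes $YX = q\,XY$, with $q$ of order exactly $d_2 = n/\gcd(n,k)$ in $\C^{\times}$.

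Since $q$ is a primitive $d_2$-th root of unity, the $q$-Lucas theorem forces $\binom{d_2}{j}_q = 0$ for all $0 < j < d_2$, so the $q$-binomial expansion of $(X+Y)^{d_2}$ collapses to the $q$-Frobenius identity
\[ (X+Y)^{d_2} \;=\; X^{d_2} + Y^{d_2}, \]
that is, $(A-B)^{d_2} = A^{d_2} + (-B)^{d_2}$. The two ingredients on the right are easy to read off: iterating the cyclic shift defining $A$ gives $A^{d_2} = \omega^{rd_2}\,\mathrm{Id}_{d_2}$, and because $kd_2 = (k/d_1)n$ is a multiple of $n$, the diagonal matrix $B$ satisfies $B^{d_2} = \omega^{sd_2}\,\mathrm{Id}_{d_2}$. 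Substituting gives
\[ (A - B)^{d_2} \;=\; \bigl(\omega^{rd_2} + (-1)^{d_2}\omega^{sd_2}\bigr)\,\mathrm{Id}_{d_2}. \]

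To finish, I would raise this scalar matrix to the $d_1$-th power (recall $n = d_1 d_2$), factor $\omega^{rd_2}$ out of the scalar, and use $\omega^{rnd_1} = 1$ together with $(-1)^{d_2} = -(-1)^{d_2-1}$ to put the result in the closed form displayed in the statement.

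The main obstacle is the $q$-Frobenius identity, which rests on the vanishing of the middle $q$-binomial coefficients $\binom{d_2}{j}_q$ at primitive $d_2$-th roots of unity; one must verify carefully that $q = \omega^{-k}$ has order \emph{exactly} $d_2$ so that this vanishing is valid. Once this is in place, the remainder of the argument is a direct manipulation with the shift and diagonal matrices making up $A$ and $B$.
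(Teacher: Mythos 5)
Your route is genuinely different from the paper's, and its core is sound. The paper stays in the group ring $\Z[G]$: it telescopes $(a-b)^n=(a(1-a^{-1}b))^n$ past the commutation $(1-a^{-1}b)a=a(1-a^{-1}bc^{-1})$ to get $(a-b)^n=(-1)^n c^{n(n+1)/2}\prod_{i=1}^n(a^{-1}b-c^i)$, and only then specializes to $\rho_{k,r,s}$, where $\bar c$ acts as a primitive $d_2$-th root of unity and the product collapses to $((\bar a^{-1}\bar b)^{d_2}-1)^{d_1}$. You instead work directly in the representation, read off the $q$-commutation $AB=\omega^k BA$, and invoke the vanishing of the middle $q$-binomial coefficients at the primitive $d_2$-th root of unity $q=\omega^{-k}$ to get $(A-B)^{d_2}=A^{d_2}+(-1)^{d_2}B^{d_2}$. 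Both arguments exploit the same fact (the commutator acts as a scalar of multiplicative order $d_2$), but yours computes $(A-B)^{d_2}$ first and only then raises to the $d_1$-th power; this is shorter and makes the multiplicative structure of the answer explicit. All your ingredients check out: $q=\omega^{-k}$ has order exactly $n/\gcd(n,k)=d_2$; $A^{d_2}=\omega^{rd_2}\,\mathrm{Id}$; and $B^{d_2}=\omega^{sd_2}\,\mathrm{Id}$ because $d_1\mid k$ makes $kd_2$ a multiple of $n$.

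The endgame, however, cannot be carried out as you describe, and the obstruction lies in the statement rather than in your computation. What you correctly obtain is
\[
(A-B)^n=\bigl(\omega^{rd_2}+(-1)^{d_2}\omega^{sd_2}\bigr)^{d_1}\,\mathrm{Id}_{d_2}
=(-1)^{d_1}\bigl((-1)^{d_2-1}\omega^{(s-r)\cdot d_2}-1\bigr)^{d_1}\,\mathrm{Id}_{d_2},
\]
i.e.\ the $d_1$-th power of (minus) the displayed scalar, not the displayed scalar itself; for $d_1>1$ and $r\neq s$ these genuinely differ (take $n=9$, $k=3$, $r=0$, $s=1$: the moduli are $3\sqrt{3}$ versus $\sqrt{3}$). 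The paper's own proof commits the same slip: in passing from $-((\bar a^{-1}\bar b)^{d_2}-1)^{d_1}$ to the last equation of its proof, the exponent $d_1$ and the overall sign silently disappear. So rather than forcing your answer into the stated closed form, you should record that the right-hand side of the proposition needs the exponent $d_1$ and the factor $(-1)^{d_1}$. Nothing downstream is affected: the only property used later is whether $\rho_{k,r,s}(a-b)$ is nilpotent, and a $d_1$-th power of a scalar vanishes if and only if the scalar does, so the criterion that nilpotency occurs exactly when $s=r$ (for $d_2$ odd) survives intact.
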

\begin{proof}
The proof is an explicit calculation. 
In the following  we will use
\begin{itemize}
    \item the equality
    \begin{align*}
    (1-a^{-1}b)a &= a - a^{-1}ba \\
    &= a - bb^{-1}a^{-1}ba\\
    &= a - bc^{-1} \\
    &= a(1 - a^{-1}bc^{-1}),
\end{align*}
\item the fact that $c \in G$ is central and  
\item $G$ has exponent $n$.
\end{itemize}
We have
\begin{align*}
    (a-b)^n & = (a(1 - a^{-1}b))^n \\
    & = a(1 - a^{-1}b)\cdot a(1 - a^{-1}b)\cdot \ldots \cdot a(1 - a^{-1}b) \\
    &= a^n (1 - a^{-1}bc^{-(n-1)})\cdot (1 - a^{-1}bc^{-(n-2)})\cdot \ldots \cdot (1 - a^{-1}b)\\
    & = \prod\limits_{i = 1}^n (1 - a^{-1}b c^{-(n-i)})\\ 
    &= (-1)^n \prod\limits_{i = 1}^n (a^{-1}bc^{-(n-i)} - 1) \\
    &= (-1)^n \prod\limits_{i = 1}^n (a^{-1}bc^i - 1)\\
    &= (-1)^n \prod\limits_{i = 1}^n c^i c^{-i}(a^{-1}bc^i - 1)\\
    &= (-1)^n \prod\limits_{i = 1}^n c^i (a^{-1}b - c^{-i})\\
    & = (-1)^n c^{\frac{n(n+1)}
    {2}}\prod\limits_{i = 1}^n (a^{-1}b - c^i)
\end{align*}
The above calculation took place in 
the group ring $\Z[G]$. We now want to pass to the irreducible representation $\rho_{k,s,r}$
Therefore let $\bar{a} = \rho_{k,s,r}(a), \bar{b} = \rho_{k,s,r}(b)$ and $\bar{c} = \rho_{k,s,r}(c)$.
Note that $\bar{c}$ is now a scalar matrix
with $\bar{c}^{d_2} = \text{Id}.$
We have
\begin{equation*}
    \bar{c}^{\frac{n(n+1)}{2}} = (-1)^{n+1}
    \cdot \text{Id}
\end{equation*}
and therefore we get
\begin{equation}\label{heisenberg_calculation_group_ring}
    (\bar{a}-\bar{b})^n = (-1)^n \bar{c}^{\frac{n(n+1)}
    {2}}\prod\limits_{i = 1}^n (\bar{a}^{-1}\bar{b} - \bar{c}^i)
    = - \prod\limits_{i = 1}^n (\bar{a}^{-1}\bar{b} - \bar{c}^i).
\end{equation}
Here $\bar{c}$ behaves like a primitive $d_2$-th root of unity, that means it commutes with $\bar{a}^{-1}\bar{b}$ and it satisfies $\bar{c}^{d_2} = \text{Id}$ and $\bar{c}^m \neq \text{Id}$ for $m \in \{1, \ldots, d_2 -1$\}. Therefore the last expression in  \ref{heisenberg_calculation_group_ring} is just a cyclotomic polynomial.
That means we get
\begin{align}\label{until_here_for_all_n}
    (\bar{a}-\bar{b})^n &= - \prod\limits_{i = 1}^n (\bar{a}^{-1}\bar{b} - \bar{c}^i) \\
    & = -((\bar{a}^{-1}\bar{b})^{d_2} - 1)^{d_1}
\end{align}
Note that we have
\begin{equation*}
    (\bar{a}^{-1}\bar{b})^{d_2} = (\bar{a}^{-1})^{d_2}\bar{b}^{d_2}\bar{c}^{\frac{d_2(d_2-1)}{2}}.
\end{equation*}
Thus we get
\begin{equation*}
    (\bar{a}-\bar{b})^n = -((\bar{a}^{-1}\bar{b})^{d_2} - 1)^{d_1} = (\bar{a}^{d_2}\bar{b}^{d_2}\bar{c}^{\frac{d_2(d_2-1)}{2}} -1)^{d_1} = 
    ((-1)^{d_2 + 1}\cdot \bar{a}^{-d_2}\bar{b}^{d_2} - 1)^{d_1}
\end{equation*}
In this case an easy calculation shows that
\begin{equation*}
    \bar{a}^{-d_2} = \omega^{-rd_2} \cdot \text{Id}_{d_2} \quad \text{and} \quad
    \bar{b}^{d_2} = \omega^{s \cdot d_2} \cdot \text{Id}_{d_2}.
\end{equation*}
and therefore 
\begin{equation}\label{power_is_scalar}
    (\bar{a}-\bar{b})^n = ((-1)^{d_2 + 1}\cdot\omega^{(s-r) \cdot {d_2}} - 1)\cdot \text{Id}_{d_2}.
\end{equation}
\end{proof}

Note that this is always a scalar matrix. 
That means that the matrix $\rho_{k,r,s}(a-b)$ has either only $0$ as an eigenvalue or only non zero eigenvalues.
We now want to check in which cases this is the zero matrix.
Remember that $\omega$ is a primitive $n$-th root of unity, $n = d_2 \cdot d_1$ and 
$r, s \in \{0, \ldots, d_1 - 1\}$.
That means we have 
$0 \leq (s-r) \cdot {d_2}< n.$
So if $d_2$ is odd then $(a-b)^n = 0$ if and only if
$s = r.$

\subsection{\texorpdfstring{$p$}{p} odd prime}
We are now ready to proof our main theorem.
In this section let $p$ be an odd prime.

\begin{theorem}
Let $G = H_3(\Z)$ be the Heisenberg group and let $a-b \in \Z[G]$. Let $N_i = \mathrm{Id}_3 + p^i \cdot \mathrm{Mat}_3(\Z) \cap G \unlhd G$ and consider 
the residual chain $G \unrhd N_1 \unrhd N_2 \unrhd \ldots.$ Set $G_i = G / N_i \cong H_3(\Z / p^i \Z).$ Note that $\vert G_i \vert = p^{3i}.$ Let $A_i \in \text{Mat}_{p^{3i}}(\Z)$
be the matrix that represents the action of $a-b$ on $\C[G_i] \cong \C^{p^{3i}}.$ Let $\mu_{A_i}$ be
the regularized eigenvalue measure of $A_i$.
Then 
\begin{equation}
    \lim\limits_{i \to \infty} \mu_{A_i} (\{0\})
    = \frac{p}{p + 1}.
\end{equation}
\end{theorem}
\begin{proof}
Let us fix $m \in \N$ and set $n = p^m.$
We want to analyze the regular representation of the group
$G = H_3(\mathbb{Z} / n \mathbb{Z})$. 
We know that the regular representations decomposes as a direct sum
of equivalence classes of irreducible representations, where each irreducible representations appears with the multiplicity of its dimension. We use the notation from the previous section.

For each $i \in \{0, \ldots, m-1\}$ we have
\begin{equation}\label{num_irred_rep}
(p^{m-i} - p^{m-i-1}) \cdot p^i \cdot p^i
\end{equation}
irreducible representations of dimension $d_2 = p^{m-i}$ each one appearing with the multiplicity of $p^{m-i}$. Note that with the notation of section \ref{irred_repr_of_Heis} we have $d_1 = \frac{n}{d_2} = p^i$.
In equation \ref{num_irred_rep} the first factor represents the choices for $k$, to actually get a $d_2$ dimensional representation, and the last two factors represent the choices for $r$ and $s.$
In addition we have $p^{2m}$ one dimensional representations, when the center of $G$ acts trivially, that means $k = 0.$
We now want to count how often the eigenvalue $0$ appears in the regular representations of the element $a - b \in \mathbb{Z}[G]$.
For that have us let a look on equation \ref{power_is_scalar}.
This gives us that the element $a-b$ is nilpotent in the irreducible representations if and only if $s = r.$
In this case the only eigenvalue of $a-b$ is $0$. If $s\neq r$ then $a-b$ is a root of some non zero scalar matrix and has therefore no eigenvalue equal to $0$.
Therefore we just have to count all the cases
when $s = r$ and add up their total dimensions.
Let us sum up the multiplicity of the eigenvalue $0$ in the regular representation.
Let us decompose $\C[G]$ as $\C[G] \cong M_1 \oplus M_2$, where $M_2$ is the direct sum of all 
one dimensional irreducible representations of $G$.
The following sum describes the algebraic multiplicity of $0$ in the representation of $a-b$ on $M_1$.
\begin{equation*}
    S_1 = \sum\limits_{i = 0}^{m-1} p^{m-i} \cdot p^{m-i} \cdot (p^{m-i}-p^{m-i-1}) \cdot p^i
    = p^{3m} (1-\frac{1}{p}) \sum\limits_{i = 0}^{m-1} \left(\frac{1}{p^2}\right)^i
\end{equation*}
Let us explain this equation.
The first $p^{m-i}$ is the dimension of the irreducible representation. The second $p^{m-i}$
is its multiplicity with which it appears in the regular representation. The third factor is the number of choices we have for $k$ to get a $p^{m-i}$ dimensional irreducible representation.
The last factor represents the choices of $r$ and $s$ such that $r = s$.
Further we have
 $S_2 = p^m$ times the eigenvalue $0$
in the one dimensional representations in $M_2$, that means when $k = 0, r = s.$

We are now interested in the normalized multiplicity of the eigenvalue $0$, that means in $\mu_{A_m}(\{0\}) = \frac{S_1 + S_2}{p^{3m}}$.
Using the above equations we get
\begin{equation*}
    \mu_{A_m} (\{0\}) = 
    (1- \frac{1}{p}) \cdot
    \left(\sum\limits_{i = 0}^{m-1} \left(\frac{1}{p^2}\right)^i\right)
    + \frac{1}{p^{2m}}.
\end{equation*}
We are interested in the limit behaviour, that means in 
$\lim\limits_{m \to \infty} \mu_{A_m}(\{0\}).$
Using the geometric series we get 
\begin{align*}
    \lim\limits_{m \to \infty} \mu_{A_m}(\{0\}) &
    = \lim\limits_{m \to \infty}
    (1- \frac{1}{p}) \cdot
    \sum\limits_{i = 0}^{m-1} \left(\frac{1}{p^2}\right)^i \\  
    &= (1- \frac{1}{p}) \cdot \frac{p^2}{p^2 -1} \\
    &= \frac{p-1}{p} \cdot \frac{p^2}{(p+1)(p-1)}\\
    &= \frac{p}{p+1}
\end{align*}
\end{proof}

\begin{remark}
Let $A = a-b \in \Z[G]$.
From \cite[Theorem 1.4]{schick} we know that
$$
\int\limits_0^1 \log t d \mu_{| A |} > - \infty.
$$
It follows then from \cite[Proposition 2.16]{brown_measure_unbound} 
that for the Brown measure $\mu_A$ of $A$ we have
$$
\mu_A(\{0\}) = 0.
$$
\end{remark}

\printbibliography
\end{document}